\newtheorem{theorem}{Theorem}[section]
\newtheorem{defin}[theorem]{Definition}
\newtheorem{lemma}[theorem]{Lemma}
\newtheorem{rem}[theorem]{Remark}
\let \div \relax
\let \vb \mathbf
\DeclareMathOperator{\div}{div}
\DeclareMathOperator{\Z}{\mathbb{Z}}
\DeclareMathOperator{\R}{\mathbb{R}}
\newcommand{\dd}{\mathrm{d}}
\newcommand{\e}{\varepsilon}
\newcommand{\weak}{\rightharpoonup}
\newcommand{\B}{\mathcal{B}}
\newcommand{\mass}{\mathfrak{m}}
\renewcommand{\d}{\partial}
\renewcommand{\rho}{\varrho}
\renewcommand{\phi}{\varphi}
\newcommand{\dx}{\dd{x}}
\newcommand{\dt}{\dd{t}}
\title[Homogenization and low Mach limit in critical perforation]{Homogenization and low Mach number limit of compressible Navier-Stokes equations in critically perforated domains}
\author{Peter Bella \and Florian Oschmann}
\date{}   
\begin{document}
\setlength{\parindent=0pt}
\maketitle

\begin{abstract}
In this note, we consider the homogenization of the compressible Navier-Stokes equations in a periodically perforated domain in $\R^3$. Assuming that the particle size scales like $\e^3$, where $\e>0$ is their mutual distance, and that the Mach number decreases fast enough, we show that in the limit $\e\to 0$, the velocity and density converge to a solution of the incompressible Navier-Stokes equations with Brinkman term. We strongly follow the methods of H{\"o}fer, Kowalczyk and Schwarzacher [\url{https://doi.org/10.1142/S0218202521500391}], where they proved convergence to Darcy's law for the particle size scaling like $\e^\alpha$ with $\alpha\in (1,3)$.
\end{abstract}

\section{Introduction}
We consider a bounded smooth domain $D\subset\R^3$ which for $\e>0$ is perforated by tiny obstacles of size $\e^3$, and show that solutions to the compressible Navier-Stokes equations in this domain converge as $\e\to 0$ to a solution of the incompressible Navier-Stokes equations with Brinkman term. To the best of our knowledge, this is the first result of homogenization of compressible fluids for a critically sized perforation.\\

There is a vast of literature concerning the homogenization of fluid flows in perforated domains. We will just cite a few. For incompressible fluids, Allaire found in \cite{Allaire90a} and \cite{Allaire90b} that, concerning the ratios of particle size and distance, there are mainly three regimes of particle sizes $\e^\alpha$, where $\alpha\geq 1$. Heuristically, if the particles are large, the velocity will slow down and finally stop. This phenomenon occurs if (in three dimensions) $\alpha\in [1,3)$ and gives rise to Darcy's law. When the particles are very small, i.e., $\alpha>3$, they should not affect the fluid, yielding that in the limit, the fluid motion is still governed by the Stokes or Navier-Stokes equations. The third regime is the so-called critical case $\alpha=3$, where the particles are large enough to put some friction on the fluid, but not too large to stop the flow. For incompressible fluids, the non-critical cases $\alpha\in (1,3)$ and $\alpha>3$ were considered in \cite{Allaire90b}, while \cite{Allaire90a} dealt with the critical case $\alpha=3$. The case $\alpha=1$ was treated in \cite{Allaire89}. In all the aforementioned literature, the proofs were given by means of suitable oscillating test functions, first introduced by Tartar in \cite{Tartar1980} and later adopted by Cioranescu and Murat in \cite{CioranescuMurat82} for the Poisson equation.\\

In the critical case, the additional friction term is the main part of Brinkman's law. Cioranescu and Murat considered in \cite{CioranescuMurat82} the Poisson equation in a perforated domain, where they found in the limit ``a strange term coming from nowhere''. This Brinkman term purely comes from the presence of holes in the domain $D_\e$. It physically represents the energy of boundary layers around each obstacle, as its columns are proportional to the drag force around a single particle \cite[Proposition~2.1.4 and Remark~2.1.5]{Allaire90a}.\\

The assumptions on the distribution of the holes can also be generalized. For the critical case, Giunti, H{\"o}fer, and Vel{\'a}zquez considered in \cite{GiuntiHoeferVelazquez18} homogenization of the Poisson equation in a randomly perforated domain. They showed that the ``strange term'' also occurs in their setting. Hillairet considered in \cite{Hillairet18} the Stokes equations and random obstacles with a hard sphere condition. This condition was removed by Giunti and H{\"o}fer \cite{GiuntiHoefer19}, where they showed that for incompressible fluids and randomly distributed holes with random radii, the randomness does not affect the convergence to Brinkman's law. More recently,  for large particles, Giunti showed in \cite{Giunti21} a similar convergence result to Darcy's law.\\

Unlike as for incompressible fluids, the homogenization theory for compressible fluids is rather sparse. Masmoudi considered in \cite{Masmoudi02} the case $\alpha=1$ of large particles, giving rise to Darcy's law. For large particles with $\alpha\in (1,3)$, Darcy's law was just recently treated in \cite{SchwarzacherDarcy} for a low Mach number limit. The case of small particles ($\alpha>3$) was treated in \cite{DieningFeireislLu,FeireislLu,LuSchwarzacher} for different growing conditions on the pressure. Random perforations in the spirit of \cite{GiuntiHoefer19} for small particles were considered by the authors in \cite{BellaOschmann2021a}, where in the limit, the equations remain unchanged as in the periodic case.\\

We want to emphasize that the methods presented here are strongly related to those of \cite{SchwarzacherDarcy}. As a matter of fact, their techniques used in the case of large holes also apply in our case for holes having critical size.\\

{\bf Notation:} Throughout the whole paper, we denote the Frobenius scalar product of two matrices $A,B\in\R^{3\times 3}$ by $A:B:=\sum_{1\leq i,j\leq 3} A_{ij}B_{ij}$. Further, we use the standard notation for Lebesgue and Sobolev spaces, where we denote this spaces even for vector valued functions as in scalar case, e.g., $L^p(D)$ instead of $L^p(D;\R^3)$. Moreover, $C>0$ denotes a constant which is independent of $\e$ and might change its value whenever it occurs.\\

{\bf Organization of the paper:} The paper is organized as follows:\\
In Section \ref{sec:Main}, we give a precise definition of the perforated domain $D_\e$ and state our main results for the steady Navier-Stokes equations. In Section \ref{sec:TestFunctions}, we introduce oscillating test functions, which will be crucial to show convergence of the velocity, density, and pressure. Section \ref{sec:BogAndUnifBounds} is devoted to invoke the concept of Bogovski\u{\i}'s operator as an inverse of the divergence, which is used to give uniform bounds independent of $\e$. In Section \ref{sec:convergence}, we show how to pass to the limit $\e\to 0$ and obtain the limiting equations.% in the stationary case. The non-stationary case is treated in Sections \ref{sec:HomUnsteadyNSE} and \ref{sec:ConvUnsteady}.

\section{Setting and main results}\label{sec:Main}
Consider a bounded domain $D\subset \R^3$ with smooth boundary. Let $\e>0$ and cover $D$ with a regular mesh of size $2\e$. Set $x_i^\e\in (2\e\Z)^3$ as the center of the cell with index $i$ and $P_i^\e:=x_i^\e+(-\e,\e)^3$. Further, let $T\Subset B_1(0)$ be a compact and simply connected set with smooth boundary and set $T_i^\e:=x_i^\e+\e^3 T$. We now define the perforated domain as
\begin{align}\label{def:D_eps}
D_\e:=D\setminus\bigcup_{i\in K_\e} T_i^\e, \quad K_\e:=\{i:\overline{P_i^\e}\subset D\}.
\end{align}
By the periodic distribution of the holes, the number of holes inside $D_\e$ satisfy
\begin{align*}
|K_\e|\leq C\, \frac{|D|}{\e^3} \quad \text{for some $C>0$ independent of $\e$}.
\end{align*}

%\subsection{Main result for the steady case}
In $D_\e$, we consider the steady compressible Navier-Stokes equations
\begin{align}\label{eq:NSE}
\begin{cases}
\div(\rho_\e \vb u_\e\otimes \vb u_\e)-\div\mathbb{S}(\nabla\vb u_\e)+\frac{1}{\e^\beta}\nabla \rho_\e^\gamma=\rho_\e \vb f+\vb g & \text{in } D_\e,\\
\div(\rho_\e \vb u_\e)=0 & \text{in } D_\e,\\
\vb u_\e=0 & \text{on } \d D_\e,\\
\end{cases}
\end{align}

where $\rho_\e,$ $\vb u_\e$ are the fluids density and velocity, respectively, and $\mathbb{S}(\nabla \vb u_\e)$ is the Newtonian viscous stress tensor of the form
\begin{align}\label{eq:stressTensor}
\mathbb{S}(\nabla \vb u)=\mu \bigg(\nabla \vb u+\nabla^T \vb u-\frac23 \div(\vb u)\mathbb{I}\bigg)+\eta\div(\vb u)\mathbb{I}
\end{align}

with viscosity coefficients $\mu>0$, $\eta\geq 0$. Further, we assume that $\gamma\geq 3$, $\beta>3\,(\gamma+1)$, and $\vb f, \vb g\in L^\infty(D)$ are given. Since the equations \eqref{eq:NSE} are invariant under adding a constant to the pressure term $\e^{-\beta}\rho_\e^\gamma$, we define
\begin{align}\label{def:pressure}
p_\e:=\e^{-\beta} (\rho_\e^\gamma-\langle \rho_\e^\gamma\rangle_\e),
\end{align}

where $\langle\cdot\rangle_\e$ denotes the mean value over $D_\e$, given by
\begin{align*}
\langle f\rangle_\e=\frac{1}{|D_\e|}\int_{D_\e} f\,\dx.
\end{align*}

We will show convergence of the velocity $\vb u_\e$ and the pressure $p_\e$ to limiting functions $\vb u$ and $p$, respectively, such that the couple $(\vb u,p)$ solves the incompressible steady Navier-Stokes-Brinkman equations
\begin{align*}
\begin{cases}
\div(\rho_0 \vb u\otimes \vb u)-\mu\Delta\vb u+\nabla p+\mu M\vb u=\rho_0 \vb f+\vb g& \text{in } D,\\
\div(\vb u)=0 & \text{in } D,\\
\vb u=0 & \text{on } \d D,
\end{cases}
\end{align*}

where the resistance matrix $M$ is introduced in the next section, and the constant $\rho_0$ is the strong limit of $\rho_\e$ in $L^{2\gamma}(D)$, which is determined by the mass constraint on $\rho_\e$ as formulated in Definition~\ref{def:WeakSol} below.\\

Before stating our main result, we introduce the standard concept of finite energy weak solutions to~\eqref{eq:NSE}.
\begin{defin}\label{def:WeakSol}
Let $D_\e$ be as in \eqref{def:D_eps} and $\gamma\geq 3$, $\mass>0$ be fixed. We say a couple $(\rho_\e, \vb u_\e)$ is a \emph{finite energy weak solution} to system \eqref{eq:NSE} if
\begin{gather*}
\rho_\e\in L^{2\gamma}(D_\e),\quad \vb u_\e\in W^{1,2}_0(D_\e),\\
\rho_\e\geq 0 \text{ a.e.~in $D_\e$,}\quad \int_{D_\e}\rho_\e\,\dx=\mass,\\
\int_{D_\e} \rho_\e \vb u_\e\cdot\nabla\psi\,\dx=0,\\
\int_{D_\e} p_\e\div \phi+(\rho_\e \vb u_\e\otimes \vb u_\e):\nabla\phi-\mathbb{S}(\nabla \vb u_\e):\nabla\phi+(\rho_\e \vb f+\vb g)\cdot\phi\,\dx=0
\end{gather*}
for all test functions $\psi\in C_c^\infty(D_\e)$ and all test functions $\phi\in C_c^\infty(D_\e;\R^3)$, where $p_\e$ is given in \eqref{def:pressure}, and the energy inequality
\begin{align}\label{eq:energyineq}
\int_{D_\e} \mathbb{S}(\nabla \vb u_\e):\nabla \vb u_\e\,\dx\leq \int_{D_\e} (\rho_\e \vb f+\vb g)\cdot \vb u_\e\,\dx
\end{align}

holds.
\end{defin}

\begin{rem}
Existence of finite energy weak solutions to system \eqref{eq:NSE} is known for all values $\gamma>3/2$; see, for instance, \cite[Theorem~4.3]{Novotny2004}. However, we need the assumption $\gamma\geq 3$ to bound the convective term $\div(\rho_\e\vb u_\e\otimes\vb u_\e)$ in a proper way, see Section \ref{sec:BogAndUnifBounds}.
\end{rem}

Let us denote the zero extension of a function $f$ with $D_\e$ as its domain of definition by $\tilde{f}$, that is,
\begin{align*}
\tilde{f}=f \text { in } D_\e,\quad \tilde{f}=0 \text{ in } \R^3\setminus D_\e.
\end{align*}

Our main result for the stationary Navier-Stokes equations now reads as follows:
\begin{theorem}\label{thm:Main}
Let $D\subset\R^3$ be a bounded domain with smooth boundary, $0<\e<1$, $D_\e$ be as in \eqref{def:D_eps}, $\gamma\geq 3$, $\mass>0$ and $\vb f,\vb g\in L^\infty(D)$. Let $\beta>3\,(\gamma+1)$ and $(\rho_\e,\vb u_\e)$ be a sequence of finite energy weak solutions to problem \eqref{eq:NSE}. Then, with $p_\e$ defined in \eqref{def:pressure}, we can extract subsequences (not relabeled) such that
\begin{align*}
\tilde{\rho}_\e &\to \rho_0 \hspace{.6em} \text{ strongly in } L^{2\gamma}(D),\\
\tilde{p}_\e&\weak p \quad \text{ weakly in } L^2(D),\\
\tilde{\vb u}_\e &\weak \vb u \hspace{.9em} \text{ weakly in } W_0^{1,2}(D),
\end{align*}

where $\rho_0=\mass/|D|$ is constant and $(p,\vb u)\in L^2(D)\times W_0^{1,2}(D)$ with $\int_D p=0$ is a weak solution to the steady incompressible Navier-Stokes-Brinkman equations
\begin{align}\label{eq:Brinkman}
\begin{cases}
\div(\rho_0\vb u\otimes \vb u)+\nabla p -\mu\Delta\vb u+\mu M\vb u=\rho_0\vb f+\vb g& \text{in } D,\\
\div(\vb u)=0 & \text{in } D,\\
\vb u=0 & \text{on } \d D,
\end{cases}
\end{align}

where $M$ will be defined in \eqref{def:Resistance}.
\end{theorem}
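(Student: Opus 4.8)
The plan is to follow the structure that has worked for compressible homogenization results of this type, combining the oscillating test function method with uniform bounds coming from the Bogovski\u{\i} operator. First I would derive the basic $\e$-uniform a priori estimates. From the energy inequality \eqref{eq:energyineq} together with Korn's and Poincar\'e's inequalities on the perforated domain (the latter with an $\e$-independent constant, since the obstacles are tiny), I expect $\|\tilde{\vb u}_\e\|_{W^{1,2}_0(D)}\le C$. The mass constraint $\int_{D_\e}\rho_\e=\mass$ fixes the average of $\rho_\e$, and then testing the momentum equation with a Bogovski\u{\i}-type corrector $\B_\e$ (the inverse divergence on $D_\e$, with operator norm uniform in $\e$ because the perforation is critical/subcritical) applied to $p_\e-\langle p_\e\rangle$ yields $\|\tilde p_\e\|_{L^2(D)}\le C$; here the hypotheses $\gamma\ge 3$ and $\beta>3(\gamma+1)$ are exactly what is needed to absorb the convective term $\div(\rho_\e\vb u_\e\otimes\vb u_\e)$ and the pressure scaling $\e^{-\beta}$, as flagged in the remark after Definition~\ref{def:WeakSol}. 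From the pressure bound and \eqref{def:pressure} one gets $\rho_\e^\gamma-\langle\rho_\e^\gamma\rangle_\e=\e^\beta p_\e\to 0$ in $L^2$, hence $\tilde\rho_\e\to\rho_0$ strongly in $L^{2\gamma}(D)$ with $\rho_0=\mass/|D|$ constant (using that $|D_\e|\to|D|$).

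Next I would extract weakly convergent subsequences $\tilde{\vb u}_\e\weak\vb u$ in $W^{1,2}_0(D)$, $\tilde p_\e\weak p$ in $L^2(D)$, $\tilde\rho_\e\to\rho_0$ strongly, and check the easy limits in the weak formulation: $\div(\vb u)=0$ (passing to the limit in $\div(\rho_\e\vb u_\e)=0$ after dividing out the now-constant $\rho_0$, or directly via the continuity equation and strong convergence of the density), and $\vb u=0$ on $\d D$ (which is immediate from the weak $W^{1,2}_0$ convergence), plus $\int_D p=0$. Strong compactness of $\tilde{\vb u}_\e$ in $L^2(D)$ (Rellich, possibly combined with the smallness of the holes so that the zero-extended velocities are compact) handles the convective term: $\rho_\e\vb u_\e\otimes\vb u_\e\to\rho_0\vb u\otimes\vb u$ in $L^1$, which is enough to pass to the limit against a fixed smooth test function $\phi\in C_c^\infty(D;\R^3)$ once one also controls the integrability. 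The genuinely delicate point, and the main obstacle, is the viscous term together with the appearance of the Brinkman friction $\mu M\vb u$: one cannot simply take $\phi$ as a test function in \eqref{eq:NSE} because $\phi$ need not vanish on $\d D_\e$. The standard remedy is to use the oscillating test functions from Section~\ref{sec:TestFunctions}, i.e.\ the cell-problem correctors $w_\e$ (close to $1$ away from the holes, vanishing on each $T_i^\e$) with $\nabla w_\e\weak 0$ but carrying a nontrivial $H^{-1}$-measure, and test \eqref{eq:NSE} against $\phi\, w_\e$ (componentwise, or against the appropriate matrix-valued corrector). Expanding $\nabla(\phi w_\e)=w_\e\nabla\phi+\phi\otimes\nabla w_\e$ and using the properties of $w_\e$ established earlier, the term $\int \mathbb{S}(\nabla\vb u_\e):(\phi\otimes\nabla w_\e)$ produces in the limit exactly the strange term $\mu\int_D M\vb u\cdot\phi$, with $M$ the resistance matrix \eqref{def:Resistance} built from the drag of a single obstacle $T$.

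Concretely, I would: (i) fix $\phi\in C_c^\infty(D;\R^3)$, let $\Phi_\e:=\phi\,w_\e$ (or its matrix refinement so that $\div\Phi_\e$ is well-behaved), which is admissible since it vanishes near the holes; (ii) insert $\Phi_\e$ into the momentum weak formulation and rearrange into "bulk" terms (those converging to the $\mu$-Laplacian, pressure, convective and force terms, using $w_\e\to 1$ strongly in $L^q$ for $q<\infty$ and $\nabla w_\e\to 0$ weakly) plus the "strange" term $-\int_{D_\e}\mathbb{S}(\nabla\vb u_\e):(\phi\otimes\nabla w_\e)$; (iii) identify the limit of the strange term using the defining properties of $w_\e$ and a div–curl / compensated-compactness argument (exactly as in Cioranescu–Murat and its fluid analogues), together with the strong $L^2$ convergence of $\vb u_\e$, to get $\mu\int_D (M\vb u)\cdot\phi$; (iv) assemble the identity and conclude that $(\vb u,p)$ satisfies the weak form of \eqref{eq:Brinkman} for all $\phi\in C_c^\infty(D;\R^3)$, hence is a weak solution. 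The only subtle bookkeeping is making sure each error term involving products like $\vb u_\e\cdot\nabla w_\e$ or $p_\e\,\phi\cdot\nabla w_\e$ vanishes — this uses the uniform bounds from step one together with the precise decay rates of $w_\e$ and $\nabla w_\e$ near the critically sized holes, which are supplied by Section~\ref{sec:TestFunctions}. I expect the identification of the strange term (step (iii)) to be where essentially all the work lies; everything else is bounding and passing to limits with the estimates already in hand.
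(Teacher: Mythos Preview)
Your overall architecture is correct and matches the paper: uniform bounds via energy inequality and Bogovski\u{\i}, strong convergence of the density to the constant $\rho_0=\mass/|D|$, extraction of weak limits, and then oscillating test functions $\phi\,\vb w_k^\e$ in the momentum equation to produce the Brinkman term via Allaire's framework. Two points, however, are misdiagnosed, and the second one is a genuine gap.

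First, the condition $\beta>3(\gamma+1)$ is \emph{not} used in the a priori estimates. The uniform bounds in Section~\ref{sec:BogAndUnifBounds} go through for any $\beta>0$ (Young's inequality absorbs the powers of $\|\rho_\e^\gamma-\langle\rho_\e\rangle_\e^\gamma\|_{L^2}$ regardless), and $\gamma\ge 3$ alone handles the convective term there. The quantitative hypothesis on $\beta$ enters only in the convergence step, and at a place you do not mention.

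Second, and this is the real gap: the correctors for a fluid problem are the \emph{Stokes} pairs $(\vb w_k^\e,q_k^\e)$, not scalar Cioranescu--Murat functions, and the pressure part $q_k^\e$ is not optional bookkeeping. After testing with $\phi\vb w_k^\e$ and symmetrizing the viscous term, the paper adds $-\int_D q_k^\e\,\div(\phi\tilde{\vb u}_\e)$ to both sides; on the left this completes exactly the expression in hypothesis~(H4), so the Brinkman term $\mu\langle M\vb e_k,\phi\vb u\rangle$ drops out immediately from Allaire's theorem with no compensated-compactness argument needed. The price is the new term $I_6=\int_D q_k^\e\,\div(\phi\tilde{\vb u}_\e)$ on the right, and this is where all the work actually lies, because $\div\vb u_\e$ does \emph{not} vanish in the compressible setting and $q_k^\e$ is not small in $L^2$. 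The paper handles $I_6$ by localizing with a cut-off $\psi_\e$ near the holes, using $\div(\rho_\e\vb u_\e)=0$ to trade $\langle\rho_\e\rangle_\e\div\vb u_\e$ for $(\rho_\e-\langle\rho_\e\rangle_\e)$ against $\nabla(q_k^\e\psi_\e\phi)$, and then closing with the quantitative low-Mach estimate $\|\rho_\e-\langle\rho_\e\rangle_\e\|_{L^{2\gamma}}\le C\e^{\beta/\gamma}$ combined with \eqref{eq:estimateLp_wk}--\eqref{eq:estimateNabla_qk} for $\nabla q_k^\e$. The resulting balance $\e^{\beta/\gamma}\cdot\e^{-3-3/\gamma}\to 0$ is precisely what forces $\beta>3(\gamma+1)$. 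Your outline treats error terms of the type $p_\e\,\phi\cdot\nabla w_\e$ as the delicate ones, but those are in fact harmless (since $\div\vb w_k^\e=0$, the pressure sees only $\vb w_k^\e\cdot\nabla\phi$); the genuinely dangerous interaction is $q_k^\e$ against the non-zero divergence of the velocity, and without the argument above your proof would not close.
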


\begin{rem}
It it well known that the solution to system \eqref{eq:Brinkman} is unique if $\vb f$ and $\vb g$ are ``sufficiently small'', see, e.g., \cite[Chapter II, Theorem~1.3]{Temam1977navier}. This smallness assumption can be dropped in the case of Stokes equations, i.e., without the convective term $\div(\rho_0 \vb u\otimes \vb u)$.
\end{rem}

\section{The cell problem and oscillating test functions}\label{sec:TestFunctions}
In this section, we introduce oscillating test functions and define the resistance matrix $M$, following the original work of Allaire \cite{Allaire90a}. We repeat here the definition of these functions as well as the estimates given in \cite{SchwarzacherDarcy}.\\

Consider for a single particle $T$ the solution $(q_k,\vb w_k)$ to the cell problem
\begin{align}\label{eq:ModelProblem}
\begin{cases}
\nabla q_k-\Delta \vb w_k=0 & \text{in } \R^3\setminus T,\\
\div(\vb w_k)=0 & \text{in } \R^3\setminus T,\\
\vb w_k=0 & \text{on } \d T,\\
\vb w_k=\vb e_k & \text{at infinity},
\end{cases}
\end{align}

where $\vb e_k$ is the $k$-th unit basis vector of the canonical basis of $\R^3$. Note that the solution exists and is unique, see, e.g., \cite[Chapter~V]{Galdi2011}. Let us further recall the definition of oscillating test functions as made in \cite{Allaire90a} (see also \cite{SchwarzacherDarcy}):\\

We set
\begin{align*}
\vb w_k^\e=\vb e_k,\quad q_k^\e=0\text{ in } P_i^\e\cap D
\end{align*}

for each $P_i^\e$ with $P_i^\e\cap\d D\neq\emptyset$. Now, we denote $B_i^r:=B_r(x_i^\e)$ and split each cell $P_i^\e$ entirely included in $D$ into the following four parts:

\begin{align*}
\overline{P_i^\e}=T_i^\e\cup \overline{C_i^\e}\cup \overline{D_i^\e}\cup \overline{K_i^\e},
\end{align*}

where $C_i^\e$ is the open ball centered at $x_i^\e$ with radius $\e/2$ and perforated by the hole $T_i^\e$, $D_i^\e=B_i^\e\setminus B_i^{\e/2}$ is the ball with radius $\e$ perforated by the ball with radius $\e/2$, and $K_i^\e=P_i^\e\setminus B_i^\e$ are the remaining corners, see Figure~\ref{fig:SplittingOfCell}. 

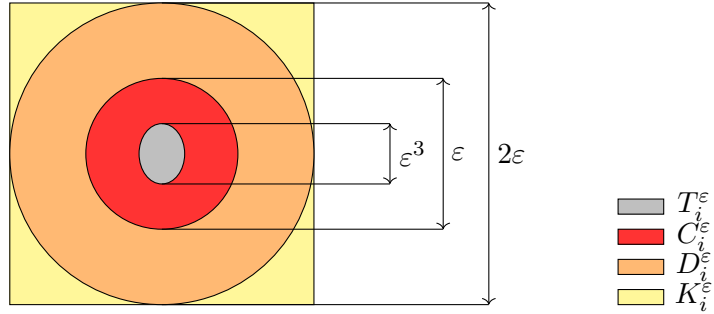
\begin{figure}[H]
\begin{tikzpicture}
% picture
\draw[fill=yellow!50] (-2,-2) rectangle (2,2);
\draw[fill=orange!55] (0,0) circle (2cm);
\draw[fill=red!80] (0,0) circle (1cm);
\draw[fill=lightgray] (0,0) ellipse (.3cm and .4cm);
\draw (0,.4)--(3,.4);
\draw (0,-.4)--(3,-.4);
\draw[<->] (3,-.4)--(3,.4);
\node at (3.3,0) {$\e^3$};
\draw (0,1)--(3.7,1);
\draw (0,-1)--(3.7,-1);
\draw[<->] (3.7,-1)--(3.7,1);
\node at (3.9,0) {$\e$};
\draw (0,2)--(4.3,2);
\draw (0,-2)--(4.3,-2);
\draw[<->] (4.3,-2)--(4.3,2);
\node at (4.6,0) {$2\e$};

% key

\draw[fill=yellow!50] (6,-2) rectangle (6.6,-1.8);
\node at (7,-1.9) {$K_i^\e$};
\draw[fill=orange!55] (6,-1.6) rectangle (6.6,-1.4);
\node at (7,-1.5) {$D_i^\e$};
\draw[fill=red!80] (6,-1.2) rectangle (6.6,-1);
\node at (7,.-1.1) {$C_i^\e$};
\draw[fill=lightgray] (6,-0.8) rectangle (6.6,-0.6);
\node at (7,-0.7) {$T_i^\e$};
\end{tikzpicture}
\caption{Splitting of the cell $P_i^\e$}
\label{fig:SplittingOfCell}
\end{figure}

In these parts, we define
\begin{align*}
&\begin{cases}
\vb w_k^\e=\vb e_k\\
q_k^\e=0
\end{cases} \text{ in } K_i^\e,
&&\begin{cases}
\nabla q_k^\e-\Delta \vb w_k^\e=0\\
\div(\vb w_k^\e)=0
\end{cases} \text{ in } D_i^\e,\\
&\begin{cases}
\vb w_k^\e(x)=\vb w_k\big(\frac{x}{\e^3}\big)\\
q_k^\e(x)=\frac{1}{\e^3}q_k\big(\frac{x}{\e^3}\big)
\end{cases} \text{ in } C_i^\e,
&&\begin{cases}
\vb w_k^\e=0\\
q_k^\e=0
\end{cases} \text{ in } T_i^\e,
\end{align*}

where we impose matching Dirichlet boundary conditions and $(q_k,\vb w_k)$ is the solution to the cell problem \eqref{eq:ModelProblem}. As shown in \cite[Lemma~3.5]{SchwarzacherDarcy}, we have for the functions $(q_k^\e,\vb w_k^\e)$ and all $p>\frac32$ the estimates
\begin{align}
\|\nabla \vb w_k^\e\|_{L^p(D)}+\|q_k^\e\|_{L^p(D)}&\leq C\e^{3\big(\frac2p-1\big)},\label{eq:estimateLp_wk}\\
\|\nabla q_k^\e\|_{L^p(\cup_i C_i^\e)}&\leq C\e^{6\big(\frac1p-1\big)},\label{eq:estimateNabla_qk}\\
\|\nabla \vb w_k^\e\|_{L^2(\cup_i B_i^\e\setminus B_i^{\e/4})}+\|q_k^\e\|_{L^2(\cup_i B_i^\e\setminus B_i^{\e/4})}&\leq C\e,\label{eq:estimateL2_wk}
\end{align}

where the constant $C>0$ does not depend on $\e$. Moreover, we have the following Theorem due to Allaire:
\begin{theorem}[{\cite[page 214, Proposition~1.1.2 and Lemma~2.3.6]{Allaire90a}}]\label{thm:propertiesCorrectors}~ \\
The functions $(q_k^\e,\vb w_k^\e)$ fulfill:
\begin{itemize}
\item[(H1)] $q_k^\e\in L^2(D),\quad \vb w_k^\e\in W^{1,2}(D)$;
\item[(H2)] $\div\vb w_k^\e=0$ in $D$ and $\vb w_k^\e=0$ on the holes $T_i^\e$;
\item[(H3)] $\vb w_k^\e\weak \vb e_k$ in $W^{1,2}(D),\quad q_k^\e\weak 0$ in $L^2(D)/\R$;
\item[(H4)] For any $\nu_\e,\nu\in W^{1,2}(D)$ with $\nu_\e=0$ on the holes $T_i^\e$ and $\nu_\e\weak\nu$, and any $\phi\in\mathcal{D}(D)$, we have
\begin{align*}
\langle\nabla q_k^\e-\Delta\vb w_k^\e, \phi\nu_\e\rangle_{W^{-1,2}(D), W_0^{1,2}(D)}\to \langle M\vb e_k, \phi\nu\rangle_{W^{-1,2}(D), W_0^{1,2}(D)},
\end{align*}

where the resistance matrix $M\in W^{-1,\infty}(D)$ is defined by its entries $M_{ik}$ via
\begin{align}\label{def:Resistance}
\langle M_{ik}, \phi\rangle_{\mathcal{D}^\prime(D), \mathcal{D}(D)}=\lim_{\e\to 0}\int_D\phi \nabla \vb w_i^\e:\nabla\vb w_k^\e\,\dx
\end{align}

for any test function $\phi\in\mathcal{D}(D)$.
\end{itemize}

Further, for any $p\geq 1$,
\begin{align*}
\|\vb w_k^\e-\vb e_k\|_{L^p(D)}\to 0.
\end{align*}
\end{theorem}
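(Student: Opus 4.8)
\textbf{Proof proposal for Theorem~\ref{thm:propertiesCorrectors}.}

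The plan is to verify each claim by unwinding the explicit construction of $(q_k^\e,\vb w_k^\e)$ from the previous paragraph and then leaning on the quantitative estimates \eqref{eq:estimateLp_wk}--\eqref{eq:estimateL2_wk}. Properties (H1) and (H2) are essentially built into the definition: on $K_i^\e$ and on cells touching $\d D$ the functions equal $(\vb e_k,0)$, on $T_i^\e$ they vanish, on $C_i^\e$ they are the rescaled cell solution (which is divergence-free and vanishes on $\d T$, hence $\vb w_k^\e=0$ on the holes), and on $D_i^\e$ they solve the Stokes system with matching Dirichlet data; so one only has to check that the pieces glue together in $W^{1,2}$ (the boundary values agree on each interface $\d C_i^\e$, $\d B_i^{\e/2}$, $\d B_i^\e$ by construction) and that $\div\vb w_k^\e=0$ globally, which follows since the trace of $\vb w_k^\e$ is continuous across each interface so no distributional divergence is created there. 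Membership $q_k^\e\in L^2$, $\vb w_k^\e\in W^{1,2}$ is then immediate from \eqref{eq:estimateLp_wk} with $p=2$ (the exponent $3(2/p-1)=0$, so the norms are bounded uniformly, in fact finite for each fixed $\e$).

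For (H3), first note $\vb w_k^\e-\vb e_k$ is supported in $\bigcup_i(C_i^\e\cup D_i^\e)\subset\bigcup_i B_i^\e$, a set of measure $\lesssim \e^3\cdot|K_\e|\lesssim 1$ times $\e^3$... more precisely $|\bigcup_i B_i^\e|\leq C|K_\e|\e^3\to 0$? No---$|K_\e|\sim\e^{-3}$ so $|\bigcup_i B_i^\e|\sim 1$; instead one uses the final assertion of the theorem, $\|\vb w_k^\e-\vb e_k\|_{L^p(D)}\to 0$, together with the uniform $W^{1,2}$ bound from \eqref{eq:estimateLp_wk} to conclude $\vb w_k^\e\weak\vb e_k$ in $W^{1,2}(D)$; similarly $\|q_k^\e\|_{L^2(D)}\leq C$ and $q_k^\e\to 0$ in some weaker sense (e.g.\ testing against $\nabla\phi$ and integrating by parts using the equation on $D_i^\e$ and $C_i^\e$) gives $q_k^\e\weak 0$ in $L^2(D)/\R$. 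To prove the final assertion $\|\vb w_k^\e-\vb e_k\|_{L^p(D)}\to 0$ for every $p\geq 1$: bound it by $\|\vb w_k^\e-\vb e_k\|_{L^q(D)}|\bigcup_i B_i^\e|^{1/p-1/q}$ for $q$ slightly above $\frac32$ where \eqref{eq:estimateLp_wk} gives $\|\nabla\vb w_k^\e\|_{L^q}\lesssim\e^{3(2/q-1)}\to 0$, hence by Poincaré on each cell (the function vanishes on $\d T_i^\e$, or one subtracts the mean and uses that it equals $\vb e_k$ on $K_i^\e$) $\|\vb w_k^\e-\vb e_k\|_{L^q(D)}\to 0$ too; a standard interpolation/Hölder argument then upgrades this to all $p\geq 1$ since the $L^p$ norms on the bounded domain $D$ are comparable up to the (bounded) measure factor.

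Property (H4) is the heart of the matter and the step I expect to be the main obstacle. Fix $\phi\in\mathcal D(D)$ and $\nu_\e\weak\nu$ in $W^{1,2}(D)$ with $\nu_\e=0$ on the holes. One pairs $\nabla q_k^\e-\Delta\vb w_k^\e$ with $\phi\nu_\e$ and integrates by parts cell by cell. On $K_i^\e$ the functional vanishes (there $\nabla q_k^\e-\Delta\vb w_k^\e=0$ identically); on $D_i^\e$ the Stokes equation holds, so the only genuine contributions, after integrating by parts, are boundary integrals over the spheres $\d B_i^{\e/2}$ and $\d B_i^\e$ and over $\d T_i^\e$ plus a bulk term $\int\nabla\phi\cdot(\ldots)$ that is controlled and shown to vanish using \eqref{eq:estimateL2_wk} (the $L^2$ smallness of $\nabla\vb w_k^\e,q_k^\e$ on the annular regions $B_i^\e\setminus B_i^{\e/4}$) and the weak convergence $\nu_\e\weak\nu$. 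The key algebraic identity is that the boundary terms on the inner sphere $\d B_i^{\e/2}$ reproduce, via the divergence theorem and the cell equation on $C_i^\e$, the drag integral $\int_{\d T_i^\e}(\ldots)$, whose limit is precisely $\langle M\vb e_k,\phi\nu\rangle$ by the scaling of the single-particle problem; since $\nu_\e$ is (approximately) constant $=\nu_\e(x_i^\e)\to\nu(x_i^\e)$ at the scale of a single hole and $\phi$ is smooth, one replaces $\phi\nu_\e$ by its value at the center $x_i^\e$ on each hole up to an error that vanishes, and a Riemann-sum argument over $i\in K_\e$ converts $\sum_i \e^3\,\phi(x_i^\e)\nu(x_i^\e)\cdot(\text{drag of one hole})$ into $\int_D\phi\,\nu\cdot M\vb e_k\,\dx$. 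The definition \eqref{def:Resistance} of $M$ is the consistent bookkeeping of these drag contributions, and the fact that $M$ is symmetric positive definite (needed implicitly and used later) follows from the variational characterization of the cell problem. I would organize the estimate so that all error terms are reduced to one of the three displayed bounds \eqref{eq:estimateLp_wk}--\eqref{eq:estimateL2_wk} together with the equi-integrability coming from $\nu_\e\weak\nu$ in $W^{1,2}$ and the compact embedding $W^{1,2}\hookrightarrow L^q$ for $q<6$; the delicate point is handling the intermediate annulus $D_i^\e$ where $\vb w_k^\e$ is only implicitly defined, which is exactly why the estimate \eqref{eq:estimateL2_wk} on $B_i^\e\setminus B_i^{\e/4}$ was recorded. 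Since all of this is carried out in detail in \cite[Section~3]{Allaire90a} and \cite[Section~3]{SchwarzacherDarcy}, I would cite those references for the routine parts and only reproduce the scaling computation that identifies the limit with $M$.
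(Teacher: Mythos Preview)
The paper does not give its own proof of this theorem: it is stated with an explicit attribution to \cite[page~214, Proposition~1.1.2 and Lemma~2.3.6]{Allaire90a} and is used as a black box thereafter. Your proposal, which sketches the cell-by-cell verification and then defers the detailed computation to \cite{Allaire90a} and \cite{SchwarzacherDarcy}, is therefore entirely in line with how the paper treats this result, and in fact goes further than the paper by outlining the mechanism behind (H4).
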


\begin{rem}
This definition of $M$ yields that the matrix is symmetric and positive definite in the sense that for all test functions $\phi_i\in\mathcal{D}(D)$ and $\Phi=(\phi_i)_{1\leq i\leq 3}$,
\begin{align*}
\langle M\Phi,\Phi\rangle_{\mathcal{D}^\prime(D),\mathcal{D}(D)}=\lim_{\e\to 0} \int_D \bigg|\sum_{i=1}^3 \phi_i\nabla\vb w_i^\e\bigg|^2\,\dx\geq 0,
\end{align*}

thus implying that there exists at least one solution to system \eqref{eq:Brinkman}.
\end{rem}

\section{Bogovski\u{\i}'s operator and uniform bounds for the steady Navier-Stokes equations}\label{sec:BogAndUnifBounds}

As in \cite{DieningFeireislLu}, we have the following result for the inverse of the divergence operator:
\begin{theorem}[{\cite[Theorem~2.3]{DieningFeireislLu}}]\label{thm:Bog}
Let $1<q<\infty$ and $D_\e$ be defined as in \eqref{def:D_eps}. There exists a bounded linear operator
\begin{align*}
\B_\e:\bigg\{f\in L^q(D_\e):\int_{D_\e} f\,\dx=0\bigg\}\to W_0^{1,q}(D_\e)
\end{align*}

such that for any $f\in L^q(D_\e)$ with $\int_{D_\e} f\,\dx=0$,
\begin{align*}
\div\B_\e(f)=f\text{ in } D_\e,\quad \|\B_\e(f)\|_{W_0^{1,q}(D_\e)}\leq C\bigg(1+\e^{3\big(\frac2q-1\big)}\bigg)\|f\|_{L^q(D_\e)},
\end{align*}

where the constant $C>0$ does not depend on $\e$.
\end{theorem}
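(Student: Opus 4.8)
The plan is to construct the Bogovski\u{\i} operator on $D_\e$ by combining a fixed Bogovski\u{\i} operator on the unperforated domain $D$ with a local correction near each hole that kills the flux created inside the removed obstacle. First I would fix, once and for all, a bounded linear operator $\B_D : \{f\in L^q(D):\int_D f=0\}\to W_0^{1,q}(D)$ with $\div\B_D(f)=f$ and $\|\B_D(f)\|_{W^{1,q}_0(D)}\le C\|f\|_{L^q(D)}$; such an operator exists since $D$ is bounded with smooth (hence Lipschitz) boundary, and its norm is of course independent of $\e$. Given $f\in L^q(D_\e)$ with mean zero, extend it by zero to $\tilde f\in L^q(D)$, which still has mean zero over $D$ (since $|D\setminus D_\e|$ carries no mass of $\tilde f$), and set $\vb v := \B_D(\tilde f)\in W_0^{1,q}(D)$, so $\div\vb v = \tilde f$ in $D$. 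The issue is that $\vb v$ need not vanish on the holes $T_i^\e$, so it is not yet an element of $W_0^{1,q}(D_\e)$.

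The key step is the local correction. On each cell $P_i^\e$ that lies in $D$, consider the rescaled hole $\e^{-3}(T_i^\e - x_i^\e) = T \Subset B_1(0)$ sitting inside the reference annular region, and use a \emph{fixed} reference Bogovski\u{\i} operator on $B_1(0)\setminus T$ (or on a fixed annulus containing $T$) to solve $\div\vb z_i = \div\vb v$ with $\vb z_i$ supported in a neighborhood of $T_i^\e$ away from $\d P_i^\e$ and $\vb z_i = 0$ on $\d T_i^\e$ matching $\vb v$; more precisely one subtracts from $\vb v$, on a shell of size comparable to $\e^3$ around each hole, a field that equals $\vb v$ on $\d T_i^\e$ and has the appropriate divergence, after first correcting the constant so the compatibility (zero flux) condition holds on each corrected subdomain. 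Then $\vb u := \vb v - \sum_i \vb z_i$ (extended by zero off the correction shells) vanishes on every $T_i^\e$, lies in $W_0^{1,q}(D_\e)$, and satisfies $\div\vb u = \tilde f = f$ in $D_\e$ because the divergences of the $\vb z_i$ were chosen to cancel against the part of $\div\vb v$ that must be relocated — one arranges $\int \vb z_i$ so that $\div\vb z_i$ over the shell integrates the flux of $\vb v$ through $\d T_i^\e$.

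The main obstacle — and the only place where the $\e$-dependent factor $\e^{3(2/q-1)}$ enters — is the scaling estimate for the corrector $\vb z_i$. By the change of variables $x\mapsto \e^3 y + x_i^\e$, the reference Bogovski\u{\i} estimate on the fixed domain $B_1\setminus T$ gives $\|\nabla\vb z_i\|_{L^q}$ controlled by $\e^{-3}\|\vb z_i\|_{L^q}$ plus the $L^q$ norm of the data, and one has to track how $\|\vb v\|_{L^q}$ and $\|\nabla\vb v\|_{L^q}$ on the shell of volume $\sim\e^9$ scale; the worst term comes from the boundary-trace data of $\vb v$ on $\d T_i^\e$, which after rescaling produces a factor $\e^{3/q}\cdot\e^{-3}$ relative to $L^q(B_1\setminus T)$-quantities, and summing $\sum_i \|\nabla\vb z_i\|_{L^q(P_i^\e)}^q \lesssim \e^{-3q}\cdot\e^{3(2-q)}\cdot |K_\e|^{-1}\cdot(\dots)$ collapses, after using $|K_\e|\lesssim\e^{-3}$ and $\|\nabla\vb v\|_{L^q(D)}\le C\|f\|_{L^q}$, to the claimed bound $\|\vb u\|_{W^{1,q}_0(D_\e)}\le C(1+\e^{3(2/q-1)})\|f\|_{L^q(D_\e)}$. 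I would carry out the bookkeeping on a single cell, using the $\e$-free reference estimate, and then sum over $i\in K_\e$ using that the $\vb z_i$ have disjoint supports; the ``$1$'' in $1+\e^{3(2/q-1)}$ is the contribution of $\vb v=\B_D(\tilde f)$ itself, and the second term is the accumulated cost of the $|K_\e|\sim\e^{-3}$ local corrections. This is exactly the construction in \cite[Theorem~2.3]{DieningFeireislLu}, so I would only sketch it and refer there for the details of the reference-cell estimate.
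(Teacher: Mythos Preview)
The paper does not give a proof of this statement; Theorem~\ref{thm:Bog} is quoted verbatim from \cite[Theorem~2.3]{DieningFeireislLu} and used as a black box throughout Section~\ref{sec:BogAndUnifBounds}. Your plan in the last sentence --- sketch the construction and refer to \cite{DieningFeireislLu} for the reference-cell estimate --- is therefore exactly what the paper does.

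Your outline (global Bogovski\u{\i} on $D$ applied to the zero extension $\tilde f$, followed by local corrections near each hole via a fixed reference operator on $B_1(0)\setminus T$) is indeed the construction of \cite{DieningFeireislLu}. The scaling bookkeeping in your last paragraph is garbled, though: the factor $|K_\e|^{-1}$ has no place in the sum over cells, and the displayed chain of exponents does not parse to $\e^{3(2/q-1)}$. The clean way to obtain the exponent is to split $\vb v$ on each correction shell into its cell average $\langle\vb v\rangle_{P_i^\e}$ plus an oscillation; the oscillation is controlled by $\e^3\|\nabla\vb v\|_{L^q(P_i^\e)}$ via Poincar\'e--Sobolev and contributes $O(1)$ after summation, while the constant part paired with $\nabla\chi_i$ contributes precisely $\e^{3(2/q-1)}\|\vb v\|_{L^q(P_i^\e)}$ per cell, and these sum in $\ell^q$ to the claimed bound. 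Since you intend only to cite, this is harmless, but if you keep the sketch you should replace that paragraph.
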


We will use this result to bound the pressure $p_\e$ by the density $\rho_\e$. Since the main ideas how to get uniform bounds on $\vb u_\e$, $\rho_\e$, and $p_\e$ are given in \cite{SchwarzacherDarcy}, we just sketch the proof in our case. First, by Korn's inequality and \eqref{eq:energyineq}, we find
\begin{align*}
\mu\|\nabla\vb u_\e\|_{L^2(D_\e)}^2\leq \|\rho_\e\|_{L^\frac65 (D_\e)}\|\vb u_\e\|_{L^6(D_\e)} \|\vb f\|_{L^\infty(D)}+\|\vb g\|_{L^\infty(D)}\|\vb u_\e\|_{L^1(D_\e)}.
\end{align*}

Together with Sobolev embedding, we obtain
\begin{align*}
\|\vb u_\e\|_{L^6(D_\e)}\leq C\|\nabla\vb u_\e\|_{L^2(D_\e)},
\end{align*}

which yields
\begin{align}\label{eq:inequVelDensity}
\|\vb u_\e\|_{L^6(D_\e)}+\|\nabla\vb u_\e\|_{L^2(D_\e)}\leq C(\|\rho_\e\|_{L^\frac65 (D_\e)}+1).
\end{align}

To get uniform bounds on the velocity, we first have to estimate the density. To this end, let $\B_\e$ be as in Theorem~\ref{thm:Bog}. Testing the first equation in \eqref{eq:NSE} with $\B_\e(p_\e)\in W_0^{1,2}(D_\e)$ yields
\begin{align*}
\|p_\e\|_{L^2(D_\e)}^2&=\int_{D_\e} p_\e\div\B_\e(p_\e)\,\dx\\
&=\int_{D_\e}\mathbb{S}(\nabla \vb u_\e):\nabla\B_\e(p_\e)-(\rho_\e\vb u_\e\otimes \vb u_\e):\nabla\B_\e(p_\e)-(\rho_\e\vb f+\vb g)\cdot\B_\e(p_\e)\,\dx.
\end{align*}

Recalling $\rho_\e\in L^{2\gamma}(D_\e)$ and $\gamma\geq 3$, this leads to
\begin{align*}
\|p_\e\|_{L^2(D_\e)}^2&\overset{\phantom{\eqref{eq:inequVelDensity}}}{\leq} C(\|\nabla \vb u_\e\|_{L^2(D_\e)}+\|\rho_\e\|_{L^6(D_\e)}\|\vb u_\e\|_{L^6(D_\e)}^2) \|\nabla\B_\e(p_\e)\|_{L^2(D_\e)}\\
&\quad +C\big(\|\vb f\|_{L^\infty(D_\e)} \|\rho_\e\|_{L^{2\gamma}(D_\e)}+\|\vb g\|_{L^\infty(D_\e)}\big)\|\B_\e(p_\e)\|_{L^2(D_\e)}\\
&\overset{\eqref{eq:inequVelDensity}}{\leq} C (\|\rho_\e\|_{L^\frac65(D_\e)}+1+\|\rho_\e\|_{L^6(D_\e)}(\|\rho_\e\|_{L^\frac65(D_\e)}^2+1))\|\nabla\B_\e(p_\e)\|_{L^2(D_\e)}\\
&\quad +C(\|\rho_\e\|_{L^{2\gamma}(D_\e)}+1)\|\B_\e(p_\e)\|_{L^2(D_\e)}\\
&\overset{\phantom{\eqref{eq:inequVelDensity}}}{\leq} C(\|\rho_\e\|_{L^{2\gamma}(D_\e)}+\|\rho_\e\|_{L^6(D_\e)} \|\rho_\e\|_{L^\frac65(D_\e)}^2+1)\|\B_\e(p_\e)\|_{W_0^{1,2}(D_\e)}\\
&\overset{\phantom{\eqref{eq:inequVelDensity}}}{\leq} C (\|\rho_\e\|_{L^{2\gamma}(D_\e)}+\|\rho_\e\|_{L^{2\gamma}(D_\e)}^3+1) \|\B_\e(p_\e)\|_{W_0^{1,2}(D_\e)}\\
&\overset{\phantom{\eqref{eq:inequVelDensity}}}{\leq} C(\|\rho_\e\|_{L^{2\gamma}(D_\e)}+\|\rho_\e\|_{L^{2\gamma}(D_\e)}^3+1)\|p_\e\|_{L^2(D_\e)},
\end{align*}

that is,
\begin{align}\label{eq:estimatePressure}
\|p_\e\|_{L^2(D_\e)}\leq C(\|\rho_\e\|_{L^{2\gamma}(D_\e)}+\|\rho_\e\|_{L^{2\gamma}(D_\e)}^3+1).
\end{align}

Further, we have
\begin{align*}
\langle\rho_\e\rangle_\e=\frac{1}{|D_\e|}\int_{D_\e} \rho_\e\,\dx=\frac{\mass}{|D_\e|}
\end{align*}

and
\begin{align*}
\frac{1}{\e^\beta}\|\rho_\e^\gamma-\langle\rho_\e\rangle_\e^\gamma\|_{L^2(D_\e)}\leq \frac{C}{\e^\beta}\|\rho_\e^\gamma-\langle\rho_\e^\gamma\rangle_\e\|_{L^2(D_\e)} \overset{\eqref{def:pressure}}{=} C\|p_\e\|_{L^2(D_\e)},
\end{align*}

see \cite[Section 3.3 and inequality (4.7)]{SchwarzacherDarcy}. This yields
\begin{align*}
&\frac{1}{\e^\beta}\|\rho_\e^\gamma-\langle\rho_\e\rangle_\e^\gamma\|_{L^2(D_\e)} \leq C\|p_\e\|_{L^2(D_\e)}\leq C(\|\rho_\e\|_{L^{2\gamma}(D_\e)}+\|\rho_\e\|_{L^{2\gamma}(D_\e)}^3+1)\\
&\leq C \bigg(\|\rho_\e^\gamma-\langle\rho_\e\rangle_\e^\gamma\|_{L^2(D_\e)}^\frac{1}{\gamma}+\frac{\mass}{|D_\e|^{1-1/(2\gamma)}}+\|\rho_\e^\gamma-\langle\rho_\e\rangle_\e^\gamma\|_{L^2(D_\e)}^\frac{3}{\gamma}+\frac{\mass^3}{|D_\e|^{3-3/(2\gamma)}}+1\bigg).
\end{align*}

Together with
\begin{align*}
ab^\frac1p\leq b+a^{p^\prime}\quad \forall a,b>0,\; \frac1p+\frac{1}{p^\prime}=1,
\end{align*}

which is a consequence of Young's inequality, we obtain, using $\gamma\geq 3$ and the fact that we may assume $\e\leq 1$ small enough,
\begin{align*}
\frac{1}{\e^\beta}\|\rho_\e^\gamma-\langle\rho_\e\rangle_\e^\gamma\|_{L^2(D_\e)}&\leq \frac{1}{4\e^\beta}\|\rho_\e^\gamma-\langle\rho_\e\rangle_\e^\gamma\|_{L^2(D_\e)}+C+\frac{1}{4\e^\beta}\|\rho_\e^\gamma-\langle\rho_\e\rangle_\e^\gamma\|_{L^2(D_\e)}+C^\prime\\
&=\frac{1}{2\e^\beta}\|\rho_\e^\gamma-\langle\rho_\e\rangle_\e^\gamma\|_{L^2(D_\e)}+C.
\end{align*}

Using that $|\rho_\e-\langle\rho_\e\rangle_\e|^\gamma\leq |\rho_\e^\gamma-\langle\rho_\e\rangle_\e^\gamma|$, which is a consequence of the triangle inequality for the metric $d(a,b)=|a-b|^\frac{1}{\gamma}$ for $\gamma\geq 1$, we conclude
\begin{align*}
\frac{1}{\e^\beta}\|\rho_\e-\langle\rho_\e\rangle_\e\|_{L^{2\gamma}(D_\e)}^\gamma\leq \frac{1}{\e^\beta}\|\rho_\e^\gamma-\langle\rho_\e\rangle_\e^\gamma\|_{L^2(D_\e)}\leq C,
\end{align*}

which further gives rise to
\begin{align*}
\|\rho_\e\|_{L^{2\gamma}(D_\e)}\leq \|\rho_\e-\langle\rho_\e\rangle_\e\|_{L^{2\gamma}(D_\e)}+C\langle\rho_\e\rangle_\e\leq C.
\end{align*}

In view of \eqref{eq:inequVelDensity} and \eqref{eq:estimatePressure}, we finally establish
\begin{align}\label{eq:unifBounds}
\begin{split}
\|\vb u_\e\|_{W_0^{1,2}(D_\e)}&\leq C,\\
\|\rho_\e\|_{L^{2\gamma}(D_\e)}&\leq C,\\
\|p_\e\|_{L^2(D_\e)}&\leq C,\\
\|\rho_\e-\langle\rho_\e\rangle_\e\|_{L^{2\gamma}(D_\e)}&\leq C\e^\frac{\beta}{\gamma}
\end{split}
\end{align}

for some constant $C>0$ independent of $\e$.

\section{Convergence proof for the steady case}\label{sec:convergence}
The proof of convergence we give here is essentially the same as in \cite{SchwarzacherDarcy}. We thus just sketch the steps done there while highlighting the differences.\\

\begin{proof}[Proof of Theorem~\ref{thm:Main}]
\emph{Step 1:} Recall that, for a function $f$ defined on $D_\e$, we denote by $\tilde{f}$ its zero prolongation to $\R^3$. By the uniform estimates \eqref{eq:unifBounds}, we can extract subsequences (not relabeled) such that
\begin{align*}
&\tilde{\vb u}_\e\weak \vb u\text{ weakly in } W_0^{1,2}(D),\\
&\tilde{p}_\e\weak p\text{ weakly in } L^2(D),\\
&\tilde{\rho}_\e\to \rho_0 \text{ strongly in } L^{2\gamma}(D),
\end{align*}

where $\rho_0=\mass/|D|>0$ is constant. The strong convergence of the density is obtained by
\begin{align*}
\|\tilde{\rho}_\e-\rho_0\|_{L^{2\gamma}(D)}&\leq \|\rho_0\|_{L^{2\gamma}(D\setminus D_\e)}+\|\rho_\e-\langle\rho_\e\rangle_\e\|_{L^{2\gamma}(D_\e)}+\|\langle\rho_\e\rangle_\e-\rho_0\|_{L^{2\gamma}(D_\e)}\\
&\leq \rho_0 |D\setminus D_\e|^{\frac{1}{2\gamma}}+C\e^\frac{\beta}{\gamma}+\mass|D_\e|^\frac{1}{2\gamma}\bigg(\frac{1}{|D_\e|}-\frac{1}{|D|}\bigg)\to 0,
\end{align*}

since $|D_\e|\to |D|$. Due to Rellich's theorem, we further have
\begin{align*}
\tilde{\vb u}_\e\to \vb u \text{ strongly in } L^q(D) \text{ for all } 1\leq q<6.
\end{align*}

\emph{Step 2:} We begin by proving that the limiting velocity $\vb u$ is solenoidal. To this end, let $\phi\in\mathcal{D}(\R^3)$. By the second equation of \eqref{eq:NSE}, we have
\begin{align*}
0=\int_{\R^3} \tilde{\rho}_\e\tilde{\vb u}_\e\cdot\nabla\phi\,\dx \to \rho_0\int_D \vb u\cdot\nabla\phi\,\dx.
\end{align*}

This together with the compactness of the trace operator yields
\begin{align}\label{eq:solenoidal}
\begin{cases}
\div\vb u=0 & \text{ in } D,\\
\vb u=0 & \text{ on } \d D.
\end{cases}
\end{align}

\emph{Step 3:} To prove convergence of the momentum equation, let $\phi\in\mathcal{D}(D)$ and use $\phi \vb w_k^\e$ as test function in the first equation of \eqref{eq:NSE}. This yields
\begin{align*}
\int_D \mathbb{S}(\nabla\tilde{\vb u}_\e):\nabla (\phi \vb w_k^\e)\dx=\!\int_D (\tilde{\rho}_\e\tilde{\vb u}_\e\otimes\tilde{\vb u}_\e):\nabla(\phi\vb w_k^\e)\dx+\int_{D}\tilde{p}_\e \div(\phi \vb w_k^\e)\dx+\int_D (\tilde{\rho}_\e \vb f+\vb g)\cdot (\phi \vb w_k^\e)\dx.
\end{align*}

Using the definition of $\mathbb{S}$ in \eqref{eq:stressTensor} and the fact that $\div(\vb w_k^\e)=0$ by (H2) of Theorem~\ref{thm:propertiesCorrectors}, we rewrite the left hand side as
\begin{align*}
&\int_D \mathbb{S}(\nabla\tilde{\vb u}_\e):\nabla(\phi \vb w_k^\e)\,\dx=\mu \int_D \nabla\tilde{\vb u}_\e:\nabla(\phi\vb w_k^\e)\,\dx+\bigg(\frac{\mu}{3}+\eta\bigg)\int_D \div(\tilde{\vb u}_\e)\div(\phi\vb w_k^\e)\,\dx\\
&=\mu \int_D \nabla\vb w_k^\e:\nabla(\phi\tilde{\vb u}_\e)+\nabla \tilde{\vb u}_\e:(\vb w_k^\e\otimes\nabla\phi)-\nabla \vb w_k^\e:(\tilde{\vb u}_\e\otimes\nabla\phi)\,\dx+\bigg(\frac{\mu}{3}+\eta\bigg)\int_D \div(\tilde{\vb u}_\e) \vb w_k^\e\cdot\nabla\phi\,\dx
\end{align*}

and add the term $-\int_D q_k^\e\div(\phi\tilde{\vb u}_\e)\,\dx$ to both sides to obtain
\begin{align*}
&\underbrace{\mu\int_D \! \nabla \vb w_k^\e:\nabla(\phi\tilde{\vb u}_\e)-q_k^\e\div(\phi\tilde{\vb u}_\e)\,\dx}_{I_1}\\
&+\underbrace{\mu\int_D \! \nabla\tilde{\vb u}_\e:(\vb w_k^\e\otimes\nabla\phi)-\nabla \vb w_k^\e:(\tilde{\vb u}_\e\otimes\nabla\phi)\,\dx}_{I_2}+\underbrace{\bigg(\frac{\mu}{3}+\eta\bigg)\int_D \! \div(\tilde{\vb u}_\e) \vb w_k^\e\cdot\nabla\phi\,\dx}_{I_3}\\
&=\underbrace{\int_D (\tilde{\rho}_\e\tilde{\vb u}_\e\otimes\tilde{\vb u}_\e):\nabla(\phi\vb w_k^\e)\,\dx}_{I_4}+\underbrace{\int_D \tilde{p}_\e \vb w_k^\e\cdot\nabla\phi+(\tilde{\rho}_\e \vb f+\vb g)\cdot(\phi\vb w_k^\e)\,\dx}_{I_5}-\underbrace{\int_D q_k^\e\div(\phi\tilde{\vb u}_\e)\,\dx}_{I_6}.
\end{align*}

Since $\nu_\e:=\tilde{\vb u}_\e$ and $\nu:=\vb u$ fulfill hypothesis (H4) of Theorem~\ref{thm:propertiesCorrectors}, we have
\begin{align*}
I_1\to \mu\,\langle M\vb e_k, \phi\vb u\rangle,
\end{align*}

where $\langle \cdot , \cdot\rangle$ denotes the dual product of $W^{-1,2}(D)$ and $W_0^{1,2}(D)$. Further, by $\tilde{\vb u}_\e\to \vb u$ strongly in $L^2(D)$ and $\nabla\vb w_k^\e\weak 0$ by hypothesis (H3),
\begin{align*}
I_2\to \mu \int_D \nabla \vb u:(\vb e_k\otimes\nabla\phi)\,\dx.
\end{align*}

Because of $\vb w_k^\e\to \vb e_k$ strongly in $L^2(D)$ and \eqref{eq:solenoidal}, we deduce
\begin{align*}
I_3\to 0,\quad I_5\to \int_D p\, \vb e_k\cdot\nabla\phi+(\rho_0 \vb f+\vb g)\cdot(\phi\vb e_k)\,\dx.
\end{align*}

\emph{Step 4:} To show convergence of $I_4$, we proceed as follows. First, since $\vb u_\e=0$ on $\d D_\e$ and $\tilde{\vb u}_\e\weak \vb u$ in $W^{1,2}(D)$, we have $\widetilde{\nabla\vb u_\e}=\nabla\tilde{\vb u}_\e\weak \nabla\vb u$ in $L^2(D)$. Second, as shown above for $\gamma\geq 3$, $\tilde{\rho}_\e\to\rho_0$ strongly in $L^{2\gamma}(D)$ and $\tilde{\vb u}_\e\to \vb u$ strongly in $L^q(D)$ for any $1\leq q<6$, in particular in $L^4(D)$. Together with the strong convergence of $\vb w_k^\e$ in any $L^p(D)$ (see Theorem~\ref{thm:propertiesCorrectors}), in particular in $L^{12}(D)$, we get
\begin{align*}
\tilde{\rho}_\e\tilde{\vb u}_\e\otimes\vb w_k^\e\to \rho_0\vb u\otimes\vb e_k \text{ strongly in } L^2(D).
\end{align*}

This together with $\div(\rho_\e\vb u_\e)=0$ yields
\begin{align*}
I_4&=\int_{D_\e} (\rho_\e\vb u_\e\otimes\vb u_\e):\nabla (\phi \vb w_k^\e)\,\dx=-\int_{D_\e} \rho_\e\vb u_\e\cdot\nabla\vb u_\e\cdot \phi\vb w_k^\e\,\dx=-\int_{D_\e}\phi \nabla\vb u_\e:(\rho_\e\vb u_\e\otimes \vb w_k^\e)\,\dx\\
&=-\int_D \phi \nabla \tilde{\vb u}_\e:(\tilde{\rho}_\e\tilde{\vb u}_\e\otimes\vb w_k^\e)\,\dx\to -\int_D \phi \nabla\vb u:(\rho_0 \vb u\otimes\vb e_k)\,\dx=\int_D (\rho_0\vb u\otimes\vb u):\nabla(\phi\vb e_k)\,\dx.
\end{align*}

In the case $\gamma>3$, one can also proceed by seeing that
\begin{align*}
\tilde{\rho}_\e\tilde{\vb u}_\e\otimes\tilde{\vb u}_\e\to \rho_0\vb u\otimes\vb u \text{ strongly in } L^2(D),
\end{align*}

where we used that $\tilde{\vb u}_\e\to \vb u$ strongly in $L^q(D)$ for $q=4\gamma/(\gamma-1)<6$.\\

\begin{comment}
To show convergence of $I_4$, we observe that by $\tilde{\rho}_\e\to \rho_0$ strongly in $L^{2\gamma}(D)$, $\tilde{\vb u}_\e\to \vb u$ strongly in $L^q(D)$ for all $1\leq q<6$, and $\gamma>3$, we deduce with $q=4\gamma/(\gamma-1)<6$
\begin{align*}
\tilde{\rho}_\e\tilde{\vb u}_\e\otimes\tilde{\vb u}_\e\to \rho_0\vb u\otimes\vb u \text{ strongly in } L^2(D).
\end{align*}

Since $\vb w_k^\e\weak \vb e_k$ in $W^{1,2}(D)$, we get
\begin{align*}
I_4\to \int_D (\rho_0 \vb u\otimes \vb u):\nabla(\phi \vb e_k).
\end{align*}

If $\gamma=3$, we obtain, using $\div(\rho_\e\vb u_\e)=0$, $\vb w_k^\e=0$ on the holes $T_i^\e$, $\tilde{\vb u}_\e\to\vb u$ strongly in $L^4(D)$, $\vb w_k^\e\to\vb e_k$ strongly in $L^{12}(D)$ by Theorem~\ref{thm:propertiesCorrectors}, and \eqref{eq:solenoidal},
\begin{align*}
I_4&=-\int_D \tilde{\rho}_\e \tilde{\vb u}_\e\cdot \nabla \tilde{\vb u}_\e\cdot \phi\vb w_k^\e\to -\int_D \rho_0\vb u\cdot \nabla\vb u\cdot \phi\vb e_k=\int_D (\rho_0\vb u\otimes\vb u):\nabla(\phi\vb e_k).
\end{align*}
\end{comment}

\emph{Step 5:} It remains to show convergence of $I_6$. First, recall $B_i^r=B_r(x_i^\e)$. We follow the idea of \cite{SchwarzacherDarcy} and introduce a further splitting of the integral:\\

Let $\psi\in C_c^\infty(B_{1/2}(0))$ be a cut-off function with $\psi=1$ on $B_{1/4}(0)$, define for $x\in B_i^{\e/2}$ the function $\psi_\e^i(x):= \psi((x-x_i^\e)/\e)$, and extend $\psi_\e^i$ by zero to the whole of $D$. Set finally $\psi_\e(x):=\sum\limits_{i:\overline{P_i^\e}\subset D} \psi_\e^i(x)$, where $P_i^\e$ is the cell of size $2\e$ with center $x_i^\e\in (2\e\Z)^3$. Then we have $\psi_\e\in C_c^\infty(\bigcup_i B_i^{\e/2})$ and
\begin{align}\label{def:CutoffPsi}
\psi_\e=1 \text{ in } \bigcup_i B_i^{\e/4},\quad |\nabla\psi_\e|\leq C\e^{-1}.
\end{align}
With this at hand, we write
\begin{align*}
\langle\rho_\e\rangle_\e\cdot I_6 &=\langle\rho_\e\rangle_\e\int_{D_\e} q_k^\e\psi_\e\div(\phi\vb u_\e)\,\dx+\langle\rho_\e\rangle_\e\int_{D_\e} q_k^\e (1-\psi_\e)\phi\div(\vb u_\e)\,\dx\\
&\qquad+\langle\rho_\e\rangle_\e\int_{D_\e} q_k^\e (1-\psi_\e)\vb u_\e\cdot\nabla\phi\,\dx\\
&=:I^1+I^2+I^3.
\end{align*}

Observe that since $\text{supp}~\psi_\e\subset \cup_i B_i^{\e/2}$, the term $I^1$ covers the behavior of $q_k^\e$ ``near'' the holes, whereas $I^2$ and $I^3$ cover the behavior ``far away''. Since $q_k^\e$ and $\psi_\e$ are $(2\e)$-periodic functions and $q_k^\e\psi_\e\in L^2(D)$, we have $q_k^\e\psi_\e\weak 0$ in $L^2(D)/\R$. This together with $\tilde{\vb u}_\e\to \vb u$ strongly in $L^2(D)$ yields
\begin{align*}
|I^3|\to 0.
\end{align*}

For $I^2$, we use the definition of $q_k^\e$ and \eqref{eq:estimateL2_wk} to find
\begin{align*}
|I^2|\leq C \int_{D\setminus\cup_i B_i^{\e/4}} |q_k^\e|\,|\div(\vb u_\e)|\,\dx\overset{\eqref{eq:unifBounds}}{\leq} C\|q_k^\e\|_{L^2(D\setminus \cup_i B_i^{\e/4})}=C\|q_k^\e\|_{L^2(\cup_i B_i^\e\setminus B_i^{\e/4})}\leq C\e\to 0.
\end{align*}

To prove $I^1\to 0$, we write, using $\div(\rho_\e \vb u_\e)=0$,
\begin{align*}
I^1&=\int_{D_\e}\nabla(q_k^\e \psi_\e \phi)\cdot(\rho_\e \vb u_\e)\,\dx-\int_{D_\e}\nabla(q_k^\e \psi_\e \phi)\cdot(\langle\rho_\e\rangle_\e \vb u_\e)\,\dx+\langle\rho_\e\rangle_\e\int_{D_\e} q_k^\e\psi_\e\vb u_\e\cdot\nabla\phi\,\dx\\
&=\int_{D_\e}\nabla(q_k^\e \psi_\e \phi)(\rho_\e-\langle\rho_\e\rangle_\e)\cdot \vb u_\e\,\dx+o(1).
\end{align*}

Here, we used again the periodicity of $q_k^\e$ and $\psi_\e$ to conclude $q_k^\e\psi_\e\weak 0$ in $L^2(D)/\R$. This and the strong convergence of $\tilde{\vb u}_\e$ to $\vb u$ in $L^2(D)$ shows that the last term vanishes in the limit $\e\to 0$. For the remaining integral, we find, recalling $\text{supp}~\psi_\e\subset \cup_i B_i^{\e/2}$ and $C_i^\e=B_i^{\e/2}\setminus T_i^\e$,
\begin{align*}
|I^1|&\leq \|\nabla(q_k^\e\psi_\e\phi)\|_{L^\frac{2\gamma}{\gamma-1}(\cup_i C_i^\e)} \|\rho_\e-\langle\rho_\e\rangle_\e\|_{L^{2\gamma}(D_\e)} \|\vb u_\e\|_{L^2(D_\e)}+o(1)\\
&\leq C\e^\frac{\beta}{\gamma} \|\nabla(q_k^\e\psi_\e\phi)\|_{L^\frac{2\gamma}{\gamma-1}(\cup_i C_i^\e)}+o(1).
\end{align*}

Since $|\nabla\psi_\e|\leq C\e^{-1}$, we have
\begin{align*}
|\nabla(q_k^\e\psi_\e\phi)|\leq C \bigg( |\nabla q_k^\e|+\frac{1}{\e} |q_k^\e|\bigg),
\end{align*}

thus
\begin{align*}
|I^1|\leq C \e^\frac{\beta}{\gamma}\bigg(\|\nabla q_k^\e\|_{L^\frac{2\gamma}{\gamma-1}(\cup_i C_i^\e)}+\frac{1}{\e} \|q_k^\e\|_{L^\frac{2\gamma}{\gamma-1}(\cup_i C_i^\e)}\bigg)+o(1).
\end{align*}

Together with \eqref{eq:estimateLp_wk} and \eqref{eq:estimateNabla_qk} for $p=2\gamma/(\gamma-1)>3/2$, we establish
\begin{align*}
|I^1|\leq C \e^\frac{\beta}{\gamma}\bigg(\e^{-3-\frac{3}{\gamma}}+\e^{-1-\frac3\gamma}\bigg)+o(1)\leq C\e^{-3+\frac{\beta-3}{\gamma}}+o(1)\to 0,
\end{align*}

provided
\begin{align*}
\beta>3\,(\gamma+1).
\end{align*}

To summarize, we have in the limit $\e\to 0$ for all functions $\phi\in\mathcal{D}(D)$
\begin{align*}
\mu\,\langle M\vb e_k,\phi\vb u\rangle-\mu\, \langle\Delta\vb u,\phi\vb e_k\rangle=-\langle\div(\rho_0 \vb u\otimes\vb u),\phi\vb e_k\rangle+\langle \rho_0\vb f+\vb g-\nabla p,\phi\vb e_k\rangle.
\end{align*}

Since $M$ is symmetric, this is
\begin{align*}
\nabla p+\rho_0\vb u\cdot\nabla \vb u -\mu\Delta\vb u+\mu M\vb u=\rho_0\vb f+\vb g \text{ in } \mathcal{D}^\prime(D),
\end{align*}

which is the first equation of \eqref{eq:Brinkman}. This finishes the proof.

\end{proof}

\noindent
\\
{\bf Acknowledgement.} The authors were partially supported by the German Science Foundation DFG in context of
the Emmy Noether Junior Research Group BE 5922/1-1.

\bibliographystyle{amsplain}
\bibliography{Lit}

\providecommand{\bysame}{\leavevmode\hbox to3em{\hrulefill}\thinspace}
\providecommand{\MR}{\relax\ifhmode\unskip\space\fi MR }
% \MRhref is called by the amsart/book/proc definition of \MR.
\providecommand{\MRhref}[2]{%
  \href{http://www.ams.org/mathscinet-getitem?mr=#1}{#2}
}
\providecommand{\href}[2]{#2}
\begin{thebibliography}{10}

\bibitem{Allaire89}
Gr\'{e}goire Allaire, \emph{Homogenization of the {S}tokes flow in a connected
  porous medium}, Asymptotic Anal. \textbf{2} (1989), no.~3, 203--222.
  \MR{1020348}

\bibitem{Allaire90a}
\bysame, \emph{Homogenization of the {N}avier--{S}tokes equations in open sets
  perforated with tiny holes. {I}. {A}bstract framework, a volume distribution
  of holes}, Arch. Rational Mech. Anal. \textbf{113} (1990), no.~3, 209--259.
  \MR{1079189}

\bibitem{Allaire90b}
\bysame, \emph{Homogenization of the {N}avier--{S}tokes equations in open sets
  perforated with tiny holes. {II}. {N}oncritical sizes of the holes for a
  volume distribution and a surface distribution of holes}, Arch. Rational
  Mech. Anal. \textbf{113} (1990), no.~3, 261--298. \MR{1079190}

\bibitem{BellaOschmann2021a}
Peter Bella and Florian Oschmann, \emph{Inverse of divergence and
  homogenization of compressible {N}avier--{S}tokes equations in randomly
  perforated domains}, arXiv preprint arXiv:2103.04323 (2021).

\bibitem{CioranescuMurat82}
Do{\"{\i}}na Cioranescu and Fran\c{c}ois Murat, \emph{Un terme \'{e}trange venu
  d'ailleurs. {I}}, Nonlinear partial differential equations and their
  applications. {C}oll\`ege de {F}rance {S}eminar, {V}ol. {III}, Res. Notes in
  Math., vol.~70, Pitman, Boston, Mass.-London, 1982, pp.~154--178, 425--426.
  \MR{670272}

\bibitem{DieningFeireislLu}
Lars Diening, Eduard Feireisl, and Yong Lu, \emph{The inverse of the divergence
  operator on perforated domains with applications to homogenization problems
  for the compressible {N}avier--{S}tokes system}, ESAIM: Control, Optimisation
  and Calculus of Variations \textbf{23} (2017), no.~3, 851--868.

\bibitem{FeireislLu}
Eduard Feireisl and Yong Lu, \emph{Homogenization of stationary
  {N}avier--{S}tokes equations in domains with tiny holes}, Journal of
  Mathematical Fluid Mechanics \textbf{17} (2015), no.~2, 381--392.

\bibitem{Galdi2011}
Giovanni~Paolo Galdi, \emph{An introduction to the mathematical theory of the
  {N}avier--{S}tokes equations}, second ed., Springer Monographs in
  Mathematics, Springer, New York, 2011, Steady-state problems. \MR{2808162}

\bibitem{Giunti21}
Arianna Giunti, \emph{Derivation of {D}arcy's law in randomly punctured
  domains}, arXiv preprint arXiv:2101.01046 (2021).

\bibitem{GiuntiHoefer19}
Arianna Giunti and Richard~Matthias H{\"o}fer, \emph{Homogenisation for the
  {S}tokes equations in randomly perforated domains under almost minimal
  assumptions on the size of the holes}, Ann. Inst. H. Poincar\'{e} Anal. Non
  Lin\'{e}aire \textbf{36} (2019), no.~7, 1829--1868. \MR{4020526}

\bibitem{GiuntiHoeferVelazquez18}
Arianna Giunti, Richard~Matthias H{\"o}fer, and Juan J.~L. Vel\'{a}zquez,
  \emph{Homogenization for the {P}oisson equation in randomly perforated
  domains under minimal assumptions on the size of the holes}, Comm. Partial
  Differential Equations \textbf{43} (2018), no.~9, 1377--1412. \MR{3915491}

\bibitem{Hillairet18}
Matthieu Hillairet, \emph{On the homogenization of the {S}tokes problem in a
  perforated domain}, Arch. Ration. Mech. Anal. \textbf{230} (2018), no.~3,
  1179--1228. \MR{3851058}

\bibitem{SchwarzacherDarcy}
Richard~Matthias H{\"o}fer, Karina Kowalczyk, and Sebastian Schwarzacher,
  \emph{Darcy’s law as low {M}ach and homogenization limit of a compressible
  fluid in perforated domains}, Mathematical Models and Methods in Applied
  Sciences \textbf{31} (2021), no.~09, 1787--1819.

\bibitem{LuSchwarzacher}
Yong Lu and Sebastian Schwarzacher, \emph{Homogenization of the compressible
  {N}avier–-{S}tokes equations in domains with very tiny holes}, Journal of
  Differential Equations \textbf{265} (2018), no.~4, 1371 -- 1406.

\bibitem{Masmoudi02}
Nader Masmoudi, \emph{Homogenization of the compressible {N}avier--{S}tokes
  equations in a porous medium}, ESAIM: {C}ontrol, {O}ptimisation and
  {C}alculus of {V}ariations \textbf{8} (2002), 885--906.

\bibitem{Novotny2004}
Antonín Novotn\'y and Ivan Stra\v{s}kraba, \emph{Introduction to the
  {M}athematical {T}heory of {C}ompressible {F}low}, OUP Oxford, New York,
  London, 2004.

\bibitem{Tartar1980}
Luc Tartar, \emph{Incompressible fluid flow in a porous medium -- convergence
  of the homogenization process}, Appendix of Non-homogeneous media and
  vibration theory (1980).

\bibitem{Temam1977navier}
Roger Temam, \emph{{N}avier--{S}tokes equations: theory and numerical
  analysis}, North-Holland Publishing Company, 1977.

\end{thebibliography}

\end{document}